\newtheorem{thm}{Theorem}
\newtheorem{lem}[thm]{Lemma}
\newtheorem{prop}[thm]{Proposition}
\newtheorem*{main}{Main Theorem}
\DeclareMathOperator{\conv}{conv}
\DeclareMathOperator{\cone}{cone}
\DeclareMathOperator{\lin}{lin}
\DeclareMathOperator{\mult}{mult}
\begin{document}

\title{Explicit computations of Fourier transforms of polyhedral cones}
\author{Quang-Nhat LE
\thanks{This project has received funding from the European Research Council (ERC) under the European Union's Horizon 2020 research and innovation programme (grant agreement No [ERC StG 716424 - CASe]).}
}

\maketitle

\begin{abstract}
The Fourier transforms of polyhedral cones can be used, via Brion’s theorem, to compute various geometric quantities of polytopes, such as volumes, moments, and lattice-point counts. We present a novel method of computing these conic Fourier transforms by polynomial interpolation. Given the fact that computing volumes of polytopes is \#P-hard (Dyer--Frieze \cite{dyer1988complexity}), we cannot hope for efficient algorithms in the general case. However, with extra assumptions on the combinatorics of the cone, we demonstrate it is possible to compute its Fourier transform efficiently.
\end{abstract}

\section{Introduction}

Fourier analysis is a marvelous tool to tackle problems in polyhedral geometry. It can be used to study continuous quantities such as volumes (Postnikov \cite{postnikov2009permutohedra}), moments (Brion--Vergne \cite{brion1997residue}) and polynomial integration (Barvinok \cite{barvinok1992exponential}. It has also been employed to investigate discrete volumes which include Ehrhart functions (Diaz--Robins \cite{diaz1997ehrhart}, Barvinok--Pommersheim \cite{barvinok1999algorithmic}), solid-angle sums (DeSario--Robins \cite{desario2011generalized}, Diaz--Le--Robins \cite{diaz1602fourier}) and exponential sums (Barvinok \cite{barvinok1993computing}). In a lot of the above use cases, the Fourier transform of a polytope is a central object. One common way to analyze this object is to apply Brion's theorem to decompose this polyhedral Fourier transform into a sum of the Fourier transforms of the tangent cones at the vertices. The following general version was proved by Alexander Barvinok (1992) \cite{barvinok1992exponential}
\begin{thm}[Brion's theorem]
	For any convex polytope $P \subset \mathbb{R}^d$, we have the decomposition
    \begin{equation}\label{eq:simplicial}
    	\hat{1}_P (\xi) = \sum_{v \text{ vertex of } P} \hat{1}_{K_P(v)} (\xi),
    \end{equation}
    The \textbf{Fourier transform} is defined as $\hat{f} (\xi) := \int_{\mathbb{R}^d} f(x) e^{2 \pi i \langle x, \xi \rangle} dx$. Next, the \textbf{indicator function} of a set $S$ is $1_S (x) := 1$ if $x \in S$ and $0$ if $x \notin S$. Finally, the \textbf{tangent cone} at the vertex $x$ is defined by
    \begin{equation}
    	K_P(v) := \{ v + x : v + tx \in P \text{ for some } t > 0 \}.
    \end{equation}
\end{thm}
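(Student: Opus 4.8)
The plan is to realize the assignment $Q \mapsto \hat{1}_Q$ as a \emph{valuation} on the algebra of polyhedra and then feed it a combinatorial decomposition of $P$ into tangent cones. The first thing to confront is analytic: for a bounded polytope the integral defining $\hat{1}_P(\xi)$ converges for every $\xi$ and produces an entire function, but for an unbounded cone such as $K_P(v)$ it does not converge, so $\hat{1}_{K_P(v)}$ must be \emph{defined} by meromorphic continuation. Concretely, for a pointed simplicial cone $K = v + \cone(u_1, \dots, u_d)$ with linearly independent generators I would compute a regularized integral (inserting a decay factor $e^{-\langle x, \eta\rangle}$, or pushing $\xi$ into a complex half-space of convergence) and obtain an expression of the shape $c\, e^{2\pi i \langle v, \xi\rangle} / \prod_{j} \langle u_j, \xi\rangle$ up to the $2\pi i$ normalization; this is manifestly meromorphic on $\mathbb{C}^d$ and is taken as the definition. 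General pointed cones are reduced to simplicial ones by triangulation and inclusion--exclusion.

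The second step is to verify that, so defined, $Q \mapsto \hat{1}_Q$ is a valuation: it respects the relation $\hat{1}_{Q_1 \cup Q_2} = \hat{1}_{Q_1} + \hat{1}_{Q_2} - \hat{1}_{Q_1 \cap Q_2}$ whenever all four sets are polyhedra. In the convergent case this is immediate by integrating the pointwise identity $1_{Q_1 \cup Q_2} = 1_{Q_1} + 1_{Q_2} - 1_{Q_1 \cap Q_2}$; the only content is that meromorphic continuation preserves such linear relations, which holds because the continuations agree on a nonempty open set and hence everywhere.

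The third and decisive step is a vanishing lemma: if a polyhedron $Q$ contains a full line, then $\hat{1}_Q \equiv 0$ as a meromorphic function. The heuristic is that, writing $Q = Q_0 + \mathbb{R} u$ and integrating first along the line direction $u$, one produces the factor $\int_{\mathbb{R}} e^{2\pi i t \langle u, \xi \rangle}\, dt$, which as a tempered distribution is supported on the hyperplane $\langle u, \xi\rangle = 0$ and therefore vanishes as a meromorphic function off that hyperplane. Turning this heuristic into a rigorous statement inside the continuation framework — checking that the regularized transform of a line-containing cone really is the zero meromorphic function, uniformly across the triangulation used to define it — is where I expect the main difficulty to lie.

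Finally I would invoke the Brianchon--Gram identity
\[
    1_P = \sum_{\emptyset \neq F \preceq P} (-1)^{\dim F}\, 1_{K_P(F)},
\]
the sum running over all nonempty faces $F$ of $P$, where $K_P(F)$ denotes the tangent cone along $F$ (for a vertex this is exactly $K_P(v)$). Applying the valuation term by term and observing that for every face with $\dim F \geq 1$ the cone $K_P(F)$ contains the linear span of $F - F$, hence a line, the vanishing lemma kills all such terms. Only the vertex contributions $\dim F = 0$ survive, yielding precisely $\hat{1}_P(\xi) = \sum_{v} \hat{1}_{K_P(v)}(\xi)$ as claimed.
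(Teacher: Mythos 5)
The paper does not actually prove this theorem; it quotes it from Barvinok's 1992 paper, so your proposal can only be measured against the standard literature proof, which is indeed the route you chose: define conic transforms by meromorphic continuation, show that $Q \mapsto \hat{1}_Q$ is a valuation that kills line-containing polyhedra, and feed it the Brianchon--Gram identity. The skeleton is the right one, and two of your steps are sound as sketched: the definition of $\hat{1}_K$ for pointed cones via triangulation, and the final step (your Brianchon--Gram identity is stated correctly, and $K_P(F)$ does contain a line whenever $\dim F \geq 1$, so only vertex terms survive).

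The genuine gap is in your step 2, and it is not a technicality. Your justification of the valuation property --- that ``the continuations agree on a nonempty open set and hence everywhere'' --- fails exactly where you need it. With the convention $\hat{f}(\xi) = \int f(x)e^{2\pi i \langle x, \xi\rangle}dx$, the integral over $K_P(v)$ converges only for complex $\xi$ whose imaginary part lies in the interior of the dual cone of $K_P(v)-v$; as $v$ ranges over the vertices of $P$, these dual cones are the maximal cones of the normal fan of $P$, so their interiors are pairwise disjoint, and the cones $K_P(F)$ with $\dim F \geq 1$ have empty convergence domain altogether. Hence there is \emph{no} open set on which even two of the integrals appearing in the Brianchon--Gram relation converge simultaneously, and the relation between the meromorphic continuations cannot be obtained by continuation from a common region. (Inside your step 1 the common-domain argument does work, because all simplicial subcones of a single pointed cone $K$ converge wherever the integral over $K$ does; that is why triangulation is unproblematic while Brianchon--Gram is not.) Establishing that the continuation extends to a linear map on the whole span of indicator functions --- one that reproduces convergent integrals and annihilates line-containing polyhedra --- is precisely the content of the exponential-valuation theorems of Lawrence, Khovanskii--Pukhlikov, and Barvinok, and it requires a genuinely different argument, e.g.\ refining indicator identities into disjoint half-open decompositions along a generic direction, or Barvinok's inductive construction. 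Relatedly, you misplace the main difficulty: once the valuation exists and commutes with translation, the vanishing lemma you flag in step 3 is a one-line argument (if $Q + tu = Q$ for all $t$, then $\hat{1}_Q(\xi) = e^{2\pi i t\langle u, \xi\rangle}\hat{1}_Q(\xi)$ forces $\hat{1}_Q \equiv 0$), whereas the existence of the valuation is the hard part --- and your proposal assumes it.
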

Therefore, the Fourier transforms of the cones $K_P(v)$ can be very useful in the study of the geometry of the polytope $P$. This is our main object of study.

Let $K \subset \mathbb{R}^d$ be a strictly convex (i.e. $K$ does not contain a line) polyhedral cone with apex $v$ and the set of generators $W := \{w_1, \dots, w_n\}$, with $n \geq d$. By \textbf{diagonals} of $K$, we mean $(d-1)$-subsets of $W$. A diagonal is called \textbf{extremal} if it is contained in the boundary $\partial K$ of $K$, and \textbf{interior} otherwise.

If $K$ is simplicial, that is $n = d$, it is very straightforward to calculate
\begin{equation}
	\hat{1}_K (\xi) = \frac{|\det(w_1, \dots, w_d)|}{\langle w_1, \xi \rangle \dots \langle w_d, \xi \rangle} \cdot e^{-2 \pi i \langle v, \xi \rangle}.
\end{equation}

When $K$ is not necessarily simplicial, we can triangulate $K$ and sum up the Fourier transforms of the simplicial faces. Thus, we can see that there exists a homogeneous polynomial $p_K$ of degree $n-d$ such that
\begin{equation}\label{eqdef:pK}
	\hat{1}_K (\xi) = \frac{p_K(\xi)}{\prod_{i=1}^n \langle w_i, \xi \rangle} \cdot e^{-2 \pi i \langle v, \xi \rangle}.
\end{equation}
However, triangulation can be rather complicated. We propose a different method using polynomial interpolation to compute $\hat{1}_K$ for non-simplicial $K$, which can be very efficient given some assumptions on the combinatorics of $K$.

    To state our Main Theorem, we need a generalization of the cross product to higher dimensions. Given $(d-1)$ vectors $x_1, \dots, x_{d-1}$ in $\mathbb{R}^d$, the \textbf{generalized cross product} $[x_1, \dots, x_{d-1}]$ is defined such that
\begin{equation}\label{eqdef:gen cross}
	\langle  [x_1, \dots, x_{d-1}], x \rangle = \det(x_1, \dots, x_{d-1},x),
\end{equation}
for any $x \in \mathbb{R}^d$. Indeed, the right-hand side gives rise to a linear functional (with variable $x$) on $\mathbb{R}^d$, which corresponds via the standard inner product to a unique vector that is taken to be the generalized cross product.

\begin{main} \label{thm:MAIN}
    Let $D = \{w_{i_1}, \dots, w_{i_{d-1}}\}$ be a diagonal of $K$. Set $D^* := [w_{i_1}, \dots, w_{i_{d-1}}]$
    \begin{itemize}
    \item[(i)] If $D$ is extremal, then 
      \begin{equation}\label{eq:pK1}
          p_K (D^*) = \varepsilon \prod_{\substack{j=1 \\ j \notin D}}^n \det(w_{i_1}, \dots, w_{i_{d-1}}, w_j),
      \end{equation}
      where $\varepsilon$ is the sign of any of the determinants on the right-hand side.
    \item[(ii)] Otherwise, if $D$ is interior, we have a simple identity:
    \begin{equation}\label{eq:pK2}
    	p_K (D^*) = 0.
    \end{equation}
    \end{itemize} 
\end{main}

In effect, the \hyperref[thm:MAIN]{Main Theorem} gives the values of the homogeneous polynomial $p_K$ at a lot of points. Therefore, if the combinatorics of the cone $K$ is sufficiently generic, we can use interpolation to determine $p_K$ exactly. This gives a novel way to compute the conic Fourier transform $\hat{1}_K$.


Suppose $p_K(\xi) = \sum_E c_E \xi^E$, where $E$ varies in the set $\mathcal{E}_{d,n} = \{(e_1, \dots, e_d) : e_i \geq 0, \sum_i e_i = n-d \}$ and $\xi^E = \xi_1^{e_1} \dots \xi_d^{e_d}$. Let us write $c = \{c_E\} \in \mathbb{R}^{\binom{n-1}{d-1}}$. Then, Equations \ref{eq:pK1} and \ref{eq:pK2} yield an (overdetermined) linear system 
\begin{equation}\label{eq:system}
	A_K x = b_K,
\end{equation} 
that has $x = c$ as a solution. Here $A_K$ is a $\binom{n}{d-1} \times \binom{n-1}{d-1}$-matrix and $b_K$ is a vector of dimension $\binom{n}{d-1}$. 

\begin{thm}\label{thm:generic}
	Suppose the generators $w_1, \dots, w_n$ of $K$ are in general positions such that no $d$-subset of them is linear dependent; or equivalently, $K$ is the cone over a simplicial polytope of dimension $d-1$.  Then, $A_K$ has full rank.
\end{thm}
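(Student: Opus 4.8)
The plan is to recast the full-rank question as a statement about a hyperplane arrangement and then prove it by induction. First, note that full rank of $A_K$ means full \emph{column} rank, since $\binom{n}{d-1} \geq \binom{n-1}{d-1}$. This is equivalent to triviality of the kernel, and a vector $x = (c_E)$ with $A_K x = 0$ is exactly a homogeneous polynomial $q(\xi) = \sum_E c_E \xi^E$ of degree $n-d$ vanishing at every point $D^*$ as $D$ ranges over all diagonals. So I must show the only such $q$ is $q \equiv 0$; equivalently, the evaluation points $\{D^* : D \text{ a diagonal}\}$ lie on no hypersurface of degree $n-d$.

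Next I would pin down the geometry of the $D^*$. Writing $\ell_j(\xi) = \langle w_j, \xi \rangle$, the defining property of the generalized cross product gives $\langle D^*, w_j \rangle = \det(w_{i_1}, \dots, w_{i_{d-1}}, w_j)$, which vanishes exactly when $j \in D$: it is $0$ for $j \in D$ (repeated column), and nonzero for $j \notin D$ because $D \cup \{j\}$ is a $d$-subset, hence linearly independent by the general position hypothesis. Thus $\ell_j(D^*) = 0 \iff j \in D$, so each $D^*$ is, projectively, the unique common zero of the $d-1$ hyperplanes $\{\ell_i = 0 : i \in D\}$. In other words, $X := \{D^* : D \text{ a diagonal}\} \subset \mathbb{P}^{d-1}$ is precisely the set of vertices of the arrangement of the $n$ general hyperplanes $\ell_1 = 0, \dots, \ell_n = 0$, a so-called star configuration of $\binom{n}{d-1}$ points, and the claim reduces to showing that the initial degree of $X$ exceeds $n-d$.

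I would then prove the stronger statement that no nonzero form of degree $\leq n-d+1$ vanishes on all of $X$, by induction on $n$. The base cases are $d = 2$ (a nonzero binary form of degree $\leq n-1$ has at most $n-1$ roots, but $X$ consists of $n$ distinct points) and $n = d$ (here $X$ is $d$ points spanning $\mathbb{P}^{d-1}$, so no hyperplane contains them). For the inductive step, suppose $q$ has degree $t \leq n-d+1$ and vanishes on $X$. Restricting to $H_n = \{\ell_n = 0\} \cong \mathbb{P}^{d-2}$, the forms $\ell_1, \dots, \ell_{n-1}$ cut out $n-1$ general hyperplanes there, and the vertices of $X$ lying on $H_n$ are exactly their star configuration. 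By the inductive hypothesis in $\mathbb{P}^{d-2}$ with $n-1$ hyperplanes, whose degree bound is $(n-1)-(d-1)+1 = n-d+1 \geq t$, the restriction $q|_{H_n}$ vanishes identically, so $\ell_n \mid q$, say $q = \ell_n q'$ with $\deg q' = t-1 \leq n-d$. But $q'$ then vanishes on all vertices off $H_n$, which are the vertices of the sub-arrangement $\ell_1, \dots, \ell_{n-1}$ in $\mathbb{P}^{d-1}$; by the inductive hypothesis for $n-1$ hyperplanes in $\mathbb{P}^{d-1}$ (degree bound $n-d$), we get $q' \equiv 0$, hence $q \equiv 0$.

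Finally, I would remark that the rank of the real matrix $A_K$ is unchanged under extension of scalars to $\mathbb{C}$, so it is harmless to run the root-counting and divisibility arguments over $\mathbb{C}$. The main obstacle is the bookkeeping in the inductive step: verifying that general position is inherited by the restricted arrangement (so both sub-families are again star configurations), and that the vertices split cleanly into those on $H_n$ and those off it with no vertex of the sub-arrangement accidentally landing on $H_n$. Both facts follow directly from the hypothesis that no $d$ of the $w_i$ are linearly dependent, since that forces any $d$ of the hyperplanes $\ell_j = 0$ to have empty common intersection.
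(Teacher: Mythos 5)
Your proposal is correct, but it takes a genuinely different route from the paper. The paper deduces Theorem \ref{thm:generic} in one line from Proposition \ref{prop:VerVan}: for any family $\mathcal{D}$ of $\binom{n-1}{d-1}$ diagonals that fills $\Sigma^{d-2}(W)$, the corresponding maximal minor of $A_K$ equals $\pm\prod_{E\in W^{[d]}}\det(E)^{\mult_\mathcal{D}(E)-1}$, which is nonzero in general position, and a filling family always exists (for instance, all diagonals avoiding $w_n$). You instead observe that a kernel vector of $A_K$ is exactly a homogeneous form of degree $n-d$ vanishing at every point $D^*$, recognize these points as the star configuration of vertices of the arrangement of hyperplanes $\langle w_j,\xi\rangle=0$ in projective space, and rule out such forms by induction on $n$: restricting to $H_n$ and invoking the inductive hypothesis for the $(n-1)$-hyperplane arrangement in $\mathbb{P}^{d-2}$ forces $\ell_n$ to divide $q$, and the cofactor then dies by the inductive hypothesis for the sub-arrangement in $\mathbb{P}^{d-1}$. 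The bookkeeping you flag does go through precisely because no $d$ of the $w_i$ are dependent (this gives both the inherited general position on $H_n$ and the clean split of vertices according to whether $w_n\in D$), the base cases $d=2$ and $n=d$ are as you say, and your passage to $\mathbb{C}$ is harmless though not actually needed, since root counting and divisibility by a linear form already work over $\mathbb{R}$. As for what each approach buys: yours is elementary and self-contained, sidesteps the delicate divisibility-and-degree-counting proof of Proposition \ref{prop:VerVan}(ii), proves a slightly stronger statement (no nonzero form of degree at most $n-d+1$ vanishes on all the $D^*$, recovering the known initial degree of a star configuration of points), and generalizes to any infinite field; the paper's route yields strictly more quantitative information, namely an explicit product formula for every maximal minor of $A_K$ and hence a characterization of exactly which $\binom{n-1}{d-1}$-subsets of the equations in System \ref{eq:system} form an invertible square subsystem (the filling families) --- which is what one needs in practice to solve for $p_K$, rather than merely to know that it is determined.
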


Therefore, when $K$ is the cone over a simplicial polytope, we can solve System \ref{eq:system} for $c$, which in turn determines the conic Fourier transform $\hat{1}_K$. Since random points are almost surely in general positions, we believe our results will have ramifications in the theory of random polytopes.

\subsection*{Acknowledgement}
The author benefits a lot from conversations with Jamie Pommersheim and Sinai Robins. He thanks ICERM and its wonderful staff for their hospitality during part of this project. He would like to express his gratitude to Karim Adiprasito, whose grant \textit{ERC StG 716424 - CASe} supports this work.

\section{Evaluations at diagonals}
Suppose $K \subset \mathbb{R}^d$ has apex $v$ and $W = \{w_1, \dots, w_n\}$ is the set of its generators. More concretely, 
\begin{equation}
	K = \{v + t_1 w_1 + \dots + t_n w_n : t_1, \dots, t_n > 0\}.
\end{equation}
We use $W^{[k]}$ to denote the set of all $k$-subsets of $W$; hence, $W^{[d-1]}$ is the set of diagonals of $K$. For any subset $S$ of $\mathbb{R}^d$, we denote
\begin{align*}
	\lin(S) &:= \{\sum_{x \in S} t_x x \text{ with } t_x \in \mathbb{R}\}, \\
	\cone(S) &:= \{\sum_{x \in S} t_x x \text{ with } t_x > 0\}, \\
	\conv(S) &:= \{\sum_{x \in S} t_x x \text{ with } 0 < t_x < 1\}. \\
\end{align*}

We note that, using the standard inner product on $\mathbb{R}^d$, one can conceptually think of the generalized cross product $[x_1, \dots, x_{d-1}]$ either as the projective dual of the hyperplane $\lin(D)$, or as the Hodge dual of the wedge product $x_1 \wedge \dots \wedge x_{d-1}$. Computationally, the generalized cross product can be calculated by the Gram--Schmidt process, or more straightforwardly, by taking the $(d-1)$-minors of the $d \times (d-1)$-matrix formed by collating the vectors $x_1, \dots, x_{d-1}$, as in the definition of the $3$-dimensional cross product.

\begin{proof}[Proof of \texorpdfstring{\hyperref[thm:MAIN]{Main Theorem}}{Main Theorem}]
	First of all, because the Fourier transform converts a translation (i.e. time shifting) into a modulation (i.e. frequency shifting), we have
\begin{equation}
	\hat{1}_{S+x_0} (\xi) = e^{-2 \pi i \langle x_0, \xi \rangle} \hat{1}_S(\xi).
\end{equation}
Thus, we may assume that the apex $v$ of $K$ is at the origin $0$.

Given a diagonal $D = \{w_{i_1}, \dots, w_{i_{d-1}}\}$, recall that $D^* = [w_{i_1}, \dots, w_{i_{d-1}}]$. We write $\det(D, x) := \det(w_{i_1}, \dots, w_{i_{d-1}}, x) = \langle D^*, x \rangle$, for brevity. Clearly, $\langle D^*, w_{i_1} \rangle = \dots = \langle D^*, w_{i_{d-1}} \rangle = 0$.
    
    \medskip
    
	\emph{For Case (i)}, take a triangulation $K = K_1 \cup \dots \cup K_m$ such that one of the simplicial cones, say $K_1$, generated by $D$, together with an extra vector $w_{i_d}$. Since $K_1$ is simplicial, Equation \ref{eq:simplicial} gives
    \begin{equation}
    	\hat{1}_{K_1}(\xi) = \frac{|\det(D, w_{i_d})|}{\prod_{k=1}^d \langle \xi, w_{i_k} \rangle} = \frac{|\det(D, w_{i_d})| \prod_{\substack{1 \leq j \leq n \\ j \notin D, j \neq w_{i_d}}} \langle \xi, w_{j}\rangle}{\prod_{j=1}^n \langle \xi, w_{j} \rangle}
    \end{equation}
    The numerator of the last fraction, evaluated at $\xi=D^*$, is equal to 
    \begin{equation}\label{eq:det1}
    	\varepsilon \prod_{\substack{1 \leq j \leq n \\ j \notin D}} \det(D, w_j),
    \end{equation} 
    where $\varepsilon$ is the sign of $\det(D, w_{i_d}) = \det(w_{i_1}, \dots, w_{i_d})$. We note that this sign does not depend on the triangulation. Indeed, because $K$ is convex, all other generators lie on one side of the hyperplane $\lin(D)$. Thus, all determinants in Equation \ref{eq:det1} have the same sign, which implies the sign $\varepsilon$ is independent of the triangulation.
		
	Take $p \geq 2$. Since $D$ is extremal, the simplicial cone $K_p$ cannot contain $D$. Therefore, when we equate the denominator of the conic Fourier transform $\hat{1}_{K_p}$ to $\prod_{j=1}^n \langle \xi, w_{j} \rangle$, the numerator must contain one of the factors $\langle \xi, w_{i_1} \rangle, \dots, \langle \xi, w_{i_{d-1}} \rangle$, which vanish when evaluated at $\xi = D^*$. This completes our proof in Case (i).
    
    \medskip
		
	\emph{For Case (ii)}, take a triangulation $K = K_1 \cup \dots \cup K_m$ such that two of the simplicial cones, say $K_1$ and $K_2$, have $w_{i_1}, \dots, w_{i_{d-1}}$ as generators. Let $w_{i^1_d}$ and $w_{i^2_d}$ be the other generators in $K_1$ and $K_2$, respectively. We have
		
	\begin{align} 
        \hat{1}_{K_1}(\xi) &= \frac{|\det(D, w_{i^1_d})|}{\langle \xi, w_{i^1_d} \rangle \prod_{k=1}^{d-1} \langle \xi, w_{i_k} \rangle} \nonumber \\
        	&= \frac{\langle \xi, w_{i^2_d} \rangle |\det(D, w_{i^1_d})| \cdot \prod_{\substack{1 \leq j \leq n \\ j \notin D \cup \{i^1_d,i^2_d\}}} \langle \xi, w_{j}\rangle}{\prod_{j=1}^n \langle \xi, w_{j} \rangle} \label{eq:k1 opposite}
	\end{align}
    
    \begin{align}
        \hat{1}_{K_2}(\xi) &= \frac{|\det(D, w_{i^2_d})|}{\langle \xi, w_{i^2_d} \rangle \prod_{k=1}^{d-1} \langle \xi, w_{i_k} \rangle} \nonumber \\
        	&= \frac{\langle \xi, w_{i^1_d} \rangle |\det(D, w_{i^2_d})| \cdot \prod_{\substack{1 \leq j \leq n \\ j \notin D \cup \{i^1_d,i^2_d\}}} \langle \xi, w_{j}\rangle}{\prod_{j=1}^n \langle \xi, w_{j} \rangle} \label{eq:k2 opposite}
	\end{align}
		
		Observe that, because $K_1$ and $K_2$ have disjoint interiors, two vectors $w_{i^1_d}$ and $w_{i^2_d}$ lie on the opposite sides of the hyperplane $\lin(D)$. Therefore, $\det(D, w_{i^1_d})$ and $\det(D, w_{i^2_d})$ have opposite signs, which implies
	\begin{align}
		& \langle D^*, w_{i^2_d} \rangle \cdot |\det(D, w_{i^1_d})| + \langle D^*, w_{i^1_d} \rangle \cdot |\det(D, w_{i^2_d})| \nonumber \\ 
        =\ & \det(D, w_{i^2_d}) \cdot |\det(D, w_{i^1_d})| + \det(D, w_{i^1_d}) \cdot |\det(D, w_{i^2_d})| \nonumber \\
        =\ & 0.
	\end{align}
    Hence, the numerators of the fractions in Equations \ref{eq:k1 opposite} and \ref{eq:k2 opposite}, when evaluated at $\xi = D^* = [w_{i_1}, \dots, w_{i_{d-1}}]$, sum up to $0$.
		
	Let $j \geq 3$. Since both sides of $\cone(D)$ have been covered by $K_1$ and $K_2$, the cone $K_j$ cannot contain $D$. Using the same argument as in Case (i), we see that the contribution of $K_j$ ($j \geq 3$) to the polynomial $p_K$ (of Equation \ref{eqdef:pK}) evaluates to $0$ when $\xi=D^*$. Thus, overall, we have proved $p_K(D^*) = 0$ and finished the proof in Case (ii).
\end{proof}

\section{Veronese--Vandermonde determinants}
The rows of the matrix $A_K$ in System \ref{eq:system} remind us of the Veronese map and the dependence of $A_K$ on the positions of the generators $w_i$ of the cone $K$ resembles the Vandermonde determinant. In this section, we will flesh out these connections.

The classical Vandermonde determinant is 
\begin{equation}
	\det \left(x_i^{k-1} \right)_{i,k = 1}^m = \prod_{1 \leq i < j \leq m} (x_j - x_i).
\end{equation} 

We would like to generalize this identity to higher dimensions using the Veronese map. Recall that the Veronese map $\nu_k:\mathbb{R}^d \to \mathbb{R}^{\binom{d+k-1}{d-1}}$ of degree $k$ is defined by taking all monomials of degree $k$ on $d$ variables and then evaluating all these monomials at each point of $\mathbb{R}^d$. For instance,
\[ \nu_2 (x_1, x_2, x_3) = (x_1^2, x_1 x_2, x_1 x_3, x_2^2, x_2 x_3, x_3^2). \]
The $i$-th row of the Vandermonde matrix can be thought of as the image of the point $(x_i,1) \in \mathbb{R}^2$ under the Veronese map $\nu_{m-1}$. Similarly, in System \ref{eq:system}, each row of the $\binom{n}{d-1} \times \binom{n-1}{d-1}$-matrix $A_K$ equals to $\nu_{n-d}(D^*)$, for a diagonal $D$ of $K$.

A maximal minor (of order $\binom{n-1}{d-1}$) of $A_K$ is given by a choice of $\binom{n-1}{d-1}$ diagonals of $K$, i.e. by an element $\mathcal{D} \in \left(W^{[d-1]}\right)^{[\binom{n-1}{d-1}]}$. We write $\mu_\mathcal{D}(A_K)$ to be the maximal minor of $A_K$ associated to $\mathcal{D}$.

If we consider elements of $W^{[p]}$ as simplices of dimension $p-1$, we can think of elements of $\left(W^{[p]}\right)^{[q]}$ as subcomplexes of the complete simplicial complex $\Sigma^{p-1}(W)$ of dimension $p-1$ with vertices in $W$. Therefore, we can regard $\mathcal{D}$ as a subcomplex of $\Sigma^{d-2}(W)$. 

Given a $(d-1)$-simplex $E \in W^{[d]}$, we define the \textbf{multiplicity} $\mult_\mathcal{D} (E)$ of the complex $\mathcal{D}$ at $E$ to be the number of facets of $E$ contained in $\mathcal{D}$, or equivalently, the number of elements $D$ of $\mathcal{D}$ such that $D \subset E$. We say that the subcomplex $\mathcal{D}$ \textbf{fills} $\Sigma^{d-2}(W)$ if, for any $(d-1)$-simplex $E$ in $\Sigma^{d-1}(W)$, at least one facet of $E$ belongs to $D$. In other words, $\mathcal{D}$ fills $\Sigma^{d-2}(W)$ if and only if $\mult_\mathcal{D} (E) \geq 1$ for all $E \in W^{[d]}$. 

\begin{prop}[Veronese--Vandermonde determinants] \label{prop:VerVan}
	Suppose the generators $w_i$ of $K$ are in general positions. Let $\mathcal{D}$ be a family of $\binom{n-1}{d-1}$ diagonals of $K$. 
    \begin{itemize}
    \item[(i)] If $\mathcal{D}$ does not fill the complete simplicial complex  $\Sigma^{d-2}(W)$, then
    \begin{equation} \label{eq:VerVan1}
    	\mu_\mathcal{D} (A_K) = 0.
    \end{equation}
    \item[(ii)] If $\mathcal{D}$ fills $\Sigma^{d-2}(W)$, then the minor of $A_K$ associated to $\mathcal{D}$ is
    \begin{equation} \label{eq:VerVan2}
    	\mu_\mathcal{D} (A_K) = \pm \prod_{E \in W^{[d]}} \det(E)^{\mult_\mathcal{D} (E) - 1}.
    \end{equation}
    \end{itemize} 
\end{prop}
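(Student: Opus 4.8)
The plan is to read $A_K$ as a \emph{generalized Vandermonde} (Veronese) matrix whose ``nodes'' are the dual vectors $D^{*}$: the row indexed by a diagonal $D$ is $\nu_{n-d}(D^{*})$, the column indexed by $E\in\mathcal{E}_{d,n}$ records the monomial $(D^{*})^{E}$, and the number of columns $\binom{n-1}{d-1}$ is exactly the dimension of the space $V_{n-d}$ of homogeneous forms of degree $n-d$ on $\mathbb{R}^{d}$. Consequently, for a family $\mathcal{D}$ of $\binom{n-1}{d-1}$ diagonals, the square submatrix has $\mu_{\mathcal{D}}(A_K)\neq 0$ if and only if the evaluation functionals $f\mapsto f(D^{*})$, $D\in\mathcal{D}$, are linearly independent on $V_{n-d}$; equivalently, no nonzero $f\in V_{n-d}$ vanishes at every node $D^{*}$. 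Throughout I will use the identity $\langle D^{*},w\rangle=\det(D,w)$, which vanishes precisely when $w\in D$ (by general position).

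For part (i), suppose $\mathcal{D}$ does not fill, and let $E_{0}\in W^{[d]}$ be a $(d-1)$-simplex none of whose facets lies in $\mathcal{D}$. Set $A:=W\setminus E_{0}$, a set of $n-d$ generators, and consider the nonzero form $f(\xi):=\prod_{w\in A}\langle\xi,w\rangle\in V_{n-d}$. For any $D\in\mathcal{D}$ we have $D\not\subseteq E_{0}$, so $D$ contains some $w\in A$; then $\langle D^{*},w\rangle=\det(D,w)=0$ because the defining matrix has a repeated column, and hence $f(D^{*})=0$. Thus the coefficient vector of $f$ is a nonzero element of the kernel of the submatrix, its columns are linearly dependent, and $\mu_{\mathcal{D}}(A_K)=0$, proving \eqref{eq:VerVan1}.

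For part (ii) I will treat both sides of \eqref{eq:VerVan2} as polynomials in the $nd$ coordinates of $w_{1},\dots,w_{n}$ and match factors. First, a degree count: each entry of $A_K$ is a monomial of degree $n-d$ in the coordinates of $D^{*}$, each of which is a $(d-1)$-minor of degree $d-1$, so every maximal minor has degree $\binom{n-1}{d-1}(n-d)(d-1)$; using $\binom{n}{d}=\tfrac{n}{d}\binom{n-1}{d-1}$ together with $\sum_{E}\mult_{\mathcal{D}}(E)=\binom{n-1}{d-1}(n-d+1)$, the right-hand side has the same degree. Next, divisibility: fix $E=\{w_{j_{1}},\dots,w_{j_{d}}\}$ with $m=\mult_{\mathcal{D}}(E)$ facets $D_{1},\dots,D_{m}$ in $\mathcal{D}$. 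On the generic point of the irreducible hypersurface $\{\det(E)=0\}$ the $d$ vectors span a single hyperplane $H$, and since each $D_{k}^{*}$ is orthogonal to the $d-1$ vectors of $D_{k}$ (which span $H$), every $D_{k}^{*}$ is a normal to $H$; hence the nodes $D_{1}^{*},\dots,D_{m}^{*}$ all collapse to one point of $\mathbb{P}^{d-1}$ and the rows $\nu_{n-d}(D_{k}^{*})$ become pairwise proportional there. Clearing one fixed entry of $\nu_{n-d}(D_{1}^{*})$ (a monomial in the coordinates of $D_{1}^{*}$, which is free of the generator of $E$ omitted to form $D_{1}$, hence coprime to $\det(E)$) and performing row operations produces $m-1$ rows each divisible by the irreducible $\det(E)$, forcing $\det(E)^{m-1}\mid\mu_{\mathcal{D}}(A_K)$. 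As the $\det(E)$ for distinct $E$ are pairwise coprime irreducibles, $\prod_{E}\det(E)^{\mult_{\mathcal{D}}(E)-1}$ divides $\mu_{\mathcal{D}}(A_K)$, and the matching degrees give $\mu_{\mathcal{D}}(A_K)=c\prod_{E}\det(E)^{\mult_{\mathcal{D}}(E)-1}$ for a scalar $c$.

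It remains to show $c=\pm 1$ (which also certifies $\mu_{\mathcal{D}}(A_K)\not\equiv 0$), and this is the main obstacle. My plan is to specialize the generators to the rational normal curve $w_{i}=(1,t_{i},\dots,t_{i}^{\,d-1})$ with distinct $t_{i}$, which lies in the general-position locus: there every $\det(E)$ is the Vandermonde product $\prod_{a<b\in E}(t_{b}-t_{a})$ and every node coordinate $\langle D^{*},w_{j}\rangle=\det(D,w_{j})$ is again such a minor, so $\mu_{\mathcal{D}}(A_K)$ becomes an explicit Laurent monomial in the differences $t_{b}-t_{a}$. I expect that a further valuation degeneration $t_{i}=T^{i}$, $T\to\infty$, singles out a unique dominant term in the Leibniz expansion of the minor, whose coefficient is $\pm 1$; comparing the leading $T$-power on both sides then yields $|c|=1$. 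Verifying that exactly one permutation dominates---equivalently, that the induced weighting on diagonals has a unique optimizer---is the delicate point, and the $d=2$ case, where the statement reduces to the classical Vandermonde factorization $\prod_{i<j}(t_{j}-t_{i})$, is the model to generalize.
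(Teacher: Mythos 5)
Your part (i) and the first two steps of part (ii) are correct and essentially reproduce the paper's argument. For (i), your null vector --- the coefficient vector of $f(\xi)=\prod_{w\in W\setminus E_0}\langle\xi,w\rangle$ --- is exactly the vector $F^+$ that the paper constructs in its Lemma on null vectors (the paper reaches it through a detour with antisymmetric matrices; your direct formulation, $\langle \mathrm{coeff}(f),\nu_{n-d}(D^*)\rangle=f(D^*)=0$ whenever $D$ meets $W\setminus E_0$, is the same idea stated more transparently). For (ii), your degree count agrees with the paper's, and your divisibility argument --- rows $\nu_{n-d}(D_k^*)$ become proportional at the generic point of $\{\det(E)=0\}$, then row operations with a pivot entry coprime to $\det(E)$ extract $\det(E)^{\mult_\mathcal{D}(E)-1}$ --- is a more careful rendering of the paper's sketch (the paper argues by letting the $\mult_\mathcal{D}(E)-1$ vectors $D_k\setminus D_1$ independently approach the hyperplane $\lin(D_1)$).

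The genuine gap is the last step: you never establish that the proportionality constant $c$ is $\pm 1$, nor even that $c\neq 0$. Everything you have proved (RHS divides LHS, degrees match) is consistent with $\mu_\mathcal{D}(A_K)\equiv 0$, so as it stands neither Equation \ref{eq:VerVan2} nor the full-rank statement of Theorem \ref{thm:generic} follows from your argument; this nonvanishing is precisely the content that makes the proposition useful. Your plan --- specialize to the moment curve $w_i=(1,t_i,\dots,t_i^{d-1})$ (legitimate, since such points are in general position and the RHS is a nonzero product of Vandermonde factors there) and then send $t_i=T^i$, $T\to\infty$, hoping a unique permutation dominates the Leibniz expansion --- is plausible but hinges exactly on the uniqueness of the optimizer, which you flag as unverified; for $d=2$ this is the rearrangement inequality, but for $d\geq 3$ the induced optimization over assignments of monomials to diagonals is a real combinatorial claim that must be proved, not assumed. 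The paper closes this step by a different (and much shorter, if terse) route: it stays with the polynomial identity in the variables $w_{ij}$ and argues that, because the coefficients of $\mu_\mathcal{D}(A_K)$ and of each $\det(E)$ are $\pm1$, the constant relating the two primitive integer polynomials must itself be $\pm 1$. To complete your proof you would either need to carry out the tropical uniqueness argument or substitute an integrality/content argument of this kind.
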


\begin{proof}[Proof of Theorem \ref{thm:generic}]
	Because we can always choose a family $\mathcal{D}$ that fills the complete simplicial complex  $\Sigma^{d-2}(W)$, Proposition \ref{prop:VerVan} implies that $A_K$ has a nonzero maximal minor, and thus, it has full rank.
\end{proof}

This proposition seems related to Theorem 4.15 in Ben Yaacov \cite{yaacov2014multivariate}, but we have not been able to figure out the connection. Since we do not need such generality as in \cite{yaacov2014multivariate}, we will provide an elementary proof resembling that of the Vandermonde determinant.

\begin{proof}[Proof of Proposition \ref{prop:VerVan}]

    \textit{In Case (i),} we will prove the matrix of $\mu_\mathcal{D} (A_K)$ admits a nonzero null vector. Because the subcomplex $\mathcal{D}$ does not fill $\Sigma^{d-2}(W)$, there exists a $(d-1)$-simplex $E \in W^{[d]}$ such that $D \not\subset E$ for all $D \in \mathcal{D}$. Therefore, any $D$ in $\mathcal{D}$ must intersect $F = W \setminus E$. Note that each row of $\mu_\mathcal{D} (A_K)$ is of the form $\nu_{n-d}(D^*)$ for some $D \in \mathcal{D}$. In Lemma \ref{lem:null2}, we construct from $F$ a nonzero vector $F^+$ such that $\langle F^+, \nu_{n-d}(D^*) = 0$, which means the matrix of $\mu_\mathcal{D} (A_K)$ admits a nonzero null vector. Therefore, $\mu_\mathcal{D} (A_K) = 0$, as desired.
    
    \textit{In Case (ii),} we think of $W$ as a matrix of $nd$ variables $w_{ij}$ for $1 \leq i \leq n$ and $1 \leq j \leq d$. Since the generalized cross product can be computed by taking minors, the entries of $\mu_\mathcal{D} (A_K)$ are polynomials in $w_{ij}$, which implies that the determinant $\mu_\mathcal{D} (A_K)$ is also a polynomial in $w_{ij}$.
     
    As a sanity check, let us compare the degrees of the two sides of Equation \ref{eq:VerVan2} as polynomials in $w_{ij}$. For each $D \in \mathcal{D}$, the coordinates of $D^*$ are determinants of $(d-1) \times (d-1)$-matrices and have degree $(d-1)$. The application of the Veronese map $\nu_{n-d}$ raises the degrees to $(d-1)(n-d)$. Hence, the matrix of $\mu_\mathcal{D} (A_K)$ have entries with degrees $(d-1)(n-d)$ and its size is $\binom{n-1}{d-1}$. Therefore, the total degree of the left-hand side of Equation \ref{eq:VerVan2} is $(d-1)(n-d)\binom{n-1}{d-1}$. On the other hand, the total degree of the right-hand side is
   	\begin{align*}
   		&d \left( \sum_{E \in W^{[d]}} mult_\mathcal{D}(E) - \binom{n}{d} \right) = d \left( \sum_{D \in \mathcal{D}} (n - \# D) - \binom{n}{d} \right) \\
        =\quad & d \left( \binom{n-1}{d-1} (n-d+1) - \binom{n}{d} \right) = \binom{n-1}{d-1} (d(n-d+1) - n) \\
        =\quad & \binom{n-1}{d-1} (d-1) (n-d),
   	\end{align*} 
as expected.

    For $D = \{w_{i_1}, \dots, w_{i_{d-1}}\}$, the defining equation (Equation \ref{eqdef:gen cross}) of the generalized cross product infers that, if the vectors $D$ are dependent, then $D^* = 0$. Also, if $w_j$ is dependent on $D$ and $D'$ is obtained by replacing a vector in $D$ with $w_j$, then $D'^*$ is parallel to $D$, that is $D'^* = \lambda D^*$ for some $\lambda \in \mathbb{R}$. Therefore, $\nu_{n-d} D'^* = \lambda^{n-d} \nu_{n-d} D^*$ are parallel.
    
    Let $E \in W^{[d]}$ be a $(d-1)$-simplex. Suppose $\mult_\mathcal{D} (E) \geq 2$. Let $D_1, D_2 \in \mathcal{D}$ such that $D_1, D_2 \subset E$. Without loss of generality, let us say $D_1 = \{w_1, ..., w_{d-2}, w_{j_1}\}$ and $D_2 = \{w_1, ..., w_{d-2}, w_{j_2}\}$. If $\det(E) = 0$, then $w_{j_2}$ is dependent on $D_1$. Therefore, as noted above, $D_1^*$ and $D_2^*$ are parallel, and so are $\nu_{n-d} D'^*$ and $\nu_{n-d} D^*$. Because of the fact that multivariate polynomial rings over fields are unique factorization domains, we see that the polynomial $\det(E)$ divides $\mu_\mathcal{D} (A_K)$. 
    
    Suppose $D_1, \dots, D_k$ are all the diagonals in $\mathcal{D}$ that are contained in the simplex $E$, where $k = \mult_\mathcal{D} (E)$. Then, the number of appearances of $\det(E)$ as divisors of $\mu_\mathcal{D} (A_K)$ is equal to $\mult_\mathcal{D} (E) - 1$. This can be seen by fixing $D_1$ and letting the $(k-1)$ vectors $D_2 \setminus D_1, \dots D_k \setminus D_1$ independently approach the hyperplane $\lin(D_1)$.

	Thus far, we have proved that the RHS of Equation \ref{eq:VerVan2} divides $\mu_\mathcal{D}(A_K)$. By the sanity check above, the degrees of the two sides are equal, which infers that they differ only by a constant factor. Since the coefficients of $\mu_\mathcal{D}(A_K)$ and $\det(E)$ are $\pm 1$, the constant factor is also $\pm 1$. Therefore, we have completed the proof in Case (ii).
    
\end{proof}

We need the following lemma before stating Lemma \ref{lem:null2}.
\begin{lem}\label{lem:null1}
    	Take $n \geq d > 0$. Let $P$ be a $\binom{n-1}{d-1} \times d$-matrix whose rows are indexed by $IV_{d,n-d}$, where $IV_{d,s} := \{ x \in \mathbb{Z}^d_{\geq 0}: x_1 + \dots + x_d = s \}$, such that
        \begin{itemize}
        \item If $x_j = 0$, then the $(x,j)$-entry of $P$ is zero.
        \item For any $y \in IV_{d,n-d-1}$, the $(y + e_j, j)$-entries, for $1 \leq j \leq d$ are equal, say, to a number $c_y$. Here $e_j$ is the $j$-th standard vector of dimension $d$.
        \end{itemize}
        Let $Q$ be an antisymmetric $d \times d$-matrix. Then, for any $v \in \mathbb{R}^d$,
        \begin{equation}\label{eq:null1}
        	\langle Pv, \nu_{n-d}(Qv)  \rangle = 0.
        \end{equation}
        Recall that here $\nu_{n-d}$ is the Veronese map of degree $n-d$.
    \end{lem}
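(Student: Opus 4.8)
The plan is to expand $\langle Pv, \nu_{n-d}(Qv)\rangle$ as an explicit double sum over the index set $IV_{d,n-d}$, use the two structural hypotheses on $P$ to collapse it into a product of two factors, and then observe that one of these factors is the quadratic form $\langle v, Qv\rangle$, which vanishes because $Q$ is antisymmetric.

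Write $z := Qv \in \mathbb{R}^d$. Both $Pv$ and $\nu_{n-d}(Qv)$ live in $\mathbb{R}^{\binom{n-1}{d-1}}$ with coordinates indexed by $x \in IV_{d,n-d}$: the $x$-coordinate of $Pv$ is $\sum_{j=1}^d P_{x,j}\, v_j$ (where $P_{x,j}$ denotes the $(x,j)$-entry), while the $x$-coordinate of $\nu_{n-d}(Qv)$ is the degree-$(n-d)$ monomial $z^x = \prod_{i=1}^d z_i^{x_i}$. Hence
\begin{equation*}
	\langle Pv, \nu_{n-d}(Qv)\rangle = \sum_{x \in IV_{d,n-d}} \Big(\sum_{j=1}^d P_{x,j}\, v_j\Big) z^x.
\end{equation*}

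Next I would invoke the two conditions on $P$. The first condition ($P_{x,j}=0$ whenever $x_j=0$) lets me restrict the inner sum to indices $j$ with $x_j \geq 1$; the second condition then reads $P_{x,j}=c_{x-e_j}$ for such $j$, with $x-e_j \in IV_{d,n-d-1}$. Substituting and reindexing by $y := x - e_j$ — a bijection between the pairs $(x,j)$ with $x_j\ge 1$ and the pairs $(y,j)$ with $y \in IV_{d,n-d-1}$ and $1 \leq j \leq d$ — and using $z^x = z_j\, z^y$, the double sum factors:
\begin{equation*}
	\sum_{j=1}^d \sum_{y \in IV_{d,n-d-1}} c_y\, v_j\, z_j\, z^y = \Big(\sum_{y \in IV_{d,n-d-1}} c_y\, z^y\Big) \Big(\sum_{j=1}^d v_j z_j\Big).
\end{equation*}

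The decisive observation is that the second factor equals $\langle v, z\rangle = \langle v, Qv\rangle = v^\top Q v$, and an antisymmetric matrix has identically vanishing quadratic form: $v^\top Q v = (v^\top Q v)^\top = v^\top Q^\top v = -v^\top Q v$ forces $v^\top Q v = 0$. Thus the entire expression vanishes, independently of the first factor $\sum_y c_y z^y$. The only delicate point is the bookkeeping in the reindexing step: the two hypotheses on $P$ are precisely engineered so that, after the substitution $y = x - e_j$, the coefficient $c_y$ becomes independent of $j$, which is exactly what permits the sum to split off the linear form $\sum_j v_j z_j$. Once that factorization is secured, the antisymmetry of $Q$ closes the argument at once.
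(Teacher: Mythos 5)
Your proof is correct and takes essentially the same approach as the paper's: expand the pairing coordinatewise, use the two hypotheses on $P$ to reindex the sum over $y = x - e_j$, factor out the quadratic form $v^\top Q v$, and kill it by antisymmetry of $Q$. Your write-up is in fact slightly more careful than the paper's, whose displayed intermediate sum omits the factor $(Qv)^y$ (a harmless typo, since the conclusion only needs $v^\top Q v = 0$).
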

    \begin{proof}
    	Note that the rows of $\nu_{n-d}(Qv)$ are indexed by $IV_{d,n-d}$ and equal to $(Qv)^x$ for $x \in IV_{d,n-d}$, where $a^x = a_1^{x_1} \dots a_d^{x_d}$. We can see that
        \begin{equation}
        	\langle Pv, \nu_{n-d}(Qv)  \rangle = \sum_{y \in IV_{d,n-d-1}} c_y (v^T Q v).
        \end{equation}
        But, $v^T Q v = 0$ because $Q$ is antisymmetric. This proves the lemma.
    \end{proof}
    
    \begin{lem}\label{lem:null2}
    	Let $F \subset \mathbb{R}^d$ be a $(n-d)$-subset and $D \subset \mathbb{R}^d$ a $(d-1)$-subset such that $D$ and $F$ intersect. Then, we can construct from $F$ a nonzero vector $F^+$ of dimension $\binom{n-1}{d-1}$ such that 
        \begin{equation}\label{eq:null2}
        	\langle F^+, \nu_{n-d}(D^*) \rangle = 0.
        \end{equation}
    \end{lem}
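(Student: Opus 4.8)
The plan is to realize the desired orthogonality as a single instance of Lemma \ref{lem:null1}. Concretely, I would produce an antisymmetric matrix $Q$, a matrix $P$ of the shape prescribed in Lemma \ref{lem:null1}, and a vector $v$, so that $Qv = \pm D^*$ while $F^+ := Pv$ is a nonzero vector built only from $F$. Lemma \ref{lem:null1} then yields $\langle Pv, \nu_{n-d}(Qv)\rangle = 0$, and since $\nu_{n-d}(\pm D^*) = (\pm 1)^{n-d}\nu_{n-d}(D^*)$, this is exactly Equation \ref{eq:null2} after dividing by the nonzero scalar $(\pm 1)^{n-d}$.

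First I would construct $Q$. Because $D$ and $F$ intersect, fix a vector $w \in D \cap F$ and write $D = D' \cup \{w\}$ with $|D'| = d-2$. Let $Q$ be the matrix of the linear map $x \mapsto [D', x]$, the generalized cross product of the fixed $d-2$ vectors of $D'$ together with the variable $x$. The defining relation (Equation \ref{eqdef:gen cross}) gives $\langle Qx, y\rangle = \det(D', x, y) = -\det(D', y, x) = -\langle Qy, x\rangle$, so $Q$ is antisymmetric. Moreover $Qw = [D', w] = \pm D^*$, the sign being that of the permutation reordering $(D', w)$ into $(w_{i_1}, \dots, w_{i_{d-1}})$. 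I set $v = w$, so that $Qv = \pm D^*$.

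Next I would construct $P$ from $F$. The key observation is that the two conditions defining $P$ in Lemma \ref{lem:null1} say precisely that, for a homogeneous polynomial $C(t) = \sum_{y \in IV_{d,n-d-1}} c_y t^y$ of degree $n-d-1$, the vector $Pv$ is the coefficient vector (indexed by $IV_{d,n-d}$) of the product $\langle v, t\rangle\, C(t)$; indeed $(Pv)_x = \sum_{j:\, x_j \geq 1} c_{x-e_j} v_j$ is exactly the coefficient of $t^x$ in $\langle v, t\rangle C(t)$. I would therefore take $C(t) := \prod_{f \in F \setminus \{w\}} \langle f, t\rangle$, a product of $n-d-1$ nonzero linear forms, and read off its coefficients $c_y$ to define $P$. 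With $v = w$ this makes $Pw$ the coefficient vector of $\langle w, t\rangle\, C(t) = \prod_{f \in F}\langle f, t\rangle$, a nonzero polynomial of degree $n-d$ (a product of nonzero linear forms in the integral domain $\mathbb{R}[t_1,\dots,t_d]$). Hence $F^+ := Pw$ is nonzero, and being the coefficient vector of $\prod_{f\in F}\langle f, t\rangle$, it depends only on $F$ and not on the auxiliary choice of $w$ — which is what the application in Proposition \ref{prop:VerVan}(i) needs, a single null vector common to every row $\nu_{n-d}(D^*)$ with $D \cap F \neq \emptyset$. Applying Lemma \ref{lem:null1} then finishes the proof.

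The step I expect to be the main obstacle is the bookkeeping identification in the previous paragraph: recognizing that the otherwise opaque row/column conditions on $P$ encode multiplication of $C(t)$ by the linear form $\langle v, t\rangle$, and checking that the index shift $x \leftrightarrow (x-e_j, j)$ matches the monomial expansion cleanly, including the convention that $c_{x-e_j}=0$ whenever $x_j=0$. Once that dictionary is in place, the antisymmetry of $Q$ and the nonvanishing of $F^+$ are routine. As a sanity check one can bypass Lemma \ref{lem:null1} entirely: since $\langle F^+, \nu_{n-d}(D^*)\rangle = \prod_{f\in F}\langle f, D^*\rangle$ and any $f \in D \cap F$ satisfies $\langle f, D^*\rangle = \det(D, f) = 0$ (a repeated column), the same $F^+$ works directly, which confirms the computation.
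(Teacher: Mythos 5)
Your main argument is correct and is essentially the paper's own proof: the paper likewise takes $F^+$ to be the coefficient vector of $\prod_{f \in F}\langle f, t\rangle$, writes it as $Pv_1$ for a chosen $v_1 \in D \cap F$, takes $Q$ antisymmetric with $Qv_1 = D^*$, and invokes Lemma \ref{lem:null1}. In fact you fill in details the paper leaves implicit: the paper merely asserts ``we can check that $F^+$ has the form $Pv_1$'' and that $D^* = Qv_1$ for \emph{some} antisymmetric $Q$, whereas you exhibit $Q$ explicitly as the matrix of $x \mapsto [D', x]$ and derive its antisymmetry from Equation \ref{eqdef:gen cross}, track the sign in $Qw = \pm D^*$, and verify that $F^+ \neq 0$ (a point the paper never addresses). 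More interestingly, your closing ``sanity check'' is a genuinely different and simpler proof: since $F^+$ is the coefficient vector of the polynomial $p(t) = \prod_{f \in F}\langle f, t\rangle$, the pairing $\langle F^+, \nu_{n-d}(D^*)\rangle$ is nothing but the evaluation $p(D^*) = \prod_{f \in F}\langle f, D^*\rangle$, which vanishes because any $f \in D \cap F$ gives $\langle f, D^*\rangle = \det(D, f) = 0$ (repeated column). This observation bypasses Lemma \ref{lem:null1} and its matrix bookkeeping entirely, while still producing a null vector that depends only on $F$, which is exactly what the application in Proposition \ref{prop:VerVan}(i) requires; it arguably makes the two-lemma scaffolding of the paper unnecessary. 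The one hypothesis both arguments need, and both leave implicit, is that the vectors of $F$ are nonzero (true in the application, where $F$ consists of generators of $K$), since otherwise $p(t)$ and hence $F^+$ would vanish identically.
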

    \begin{proof}
    	Suppose $F = \{v_1, \dots, v_m\}$. We will construct $F^+$ whose coordinates are indexed by $x \in IV_{d,n-d}$. Then, we set the $x$-th coordinate of $F^+$ to be
        \begin{equation}
        	F^+_x = \sum_{\substack{k \in \{1,\dots,d\}^{n-d} \\ \sigma(k) = x}} \prod_{i=1}^{n-d} (v_i)_{k_i},
        \end{equation}
        where $\sigma(k) = \sum_{i=1}^{n-d} k_i e_i$ with $e_i$ the $i$-th standard vector of dimension $d$. We can check that $F^+$ has the form $Pv_1$ for a matrix $P$ as in Lemma \ref{lem:null1}.
        
        Suppose $v_1 \in D \cap F$. Then, $D^* = Qv_1$ for some antisymmetric $d \times d$-matrix. Therefore, by Lemma \ref{lem:null1},
		\begin{equation}
			\langle F^+, \nu_{n-d}(D^*) \rangle = \langle Pv_1, \nu_{n-d}(Qv_1)  \rangle = 0.
		\end{equation}        
    \end{proof}

\bibliographystyle{alpha}
\bibliography{sample}

\end{document}